\newtheorem{theorem}{Theorem}
\newtheorem*{theorem*}{Theorem}
\newtheorem{lemma}[theorem]{Lemma}
\newtheorem{proposition}[theorem]{Proposition}
\newtheorem{claim}[theorem]{Claim}
\newtheorem{corollary}[theorem]{Corollary}
\newtheorem{question}[theorem]{Question}
\newtheorem{problem}[theorem]{Problem}
\newtheorem{remark}[theorem]{Remark}
\newtheorem{maintheorem}{Theorem}
\theoremstyle{definition}
\newtheorem{definition}[theorem]{Definition}
\newtheorem*{definition*}{Definition}
\newtheorem*{lemma*}{Lemma}
\numberwithin{equation}{section}
\numberwithin{theorem}{section}
\newcommand{\N}{\mathbb{N}}
\newcommand{\Z}{\mathbb{Z}}
\DeclareDocumentCommand\Pr{ m g }{\ensuremath{
    {   \IfNoValueTF {#2}
      {\mathbb{P}\left[{#1}\right]}
      {\mathbb{P}\left[{#1}\middle\vert{#2}\right]}%
    }
}}
\DeclareDocumentCommand\E{ m g }{\ensuremath{
    {   \IfNoValueTF {#2}
      {\mathbb{E}\left[{#1}\right]}
      {\mathbb{E}\left[{#1}\middle\vert{#2}\right]}%
    }
}}
\DeclareMathOperator{\supp}{supp}
\def\cc{\curvearrowright}
\def\ee{\mathrm{e}}
\def\cC{\mathfrak{C}}
\begin{document}

\title[]{A quantitative Neumann lemma for finitely generated groups}

\author[]{Elia Gorokhovsky}

\author[]{Nicol\'as Matte Bon}
\address{California Institute of Technology}
\address{
	CNRS,
	Institut Camille Jordan (ICJ, UMR CNRS 5208),
	Universit\'e Lyon 1}

\author[]{Omer Tamuz}



\thanks{Nicol\'as Matte Bon is partially supported by Labex MILyon and by ANR Gromeov (ANR-19-CE40-0007). Omer Tamuz was supported by a Sloan fellowship, a BSF award (\#2018397) and a National Science Foundation CAREER award (DMS-1944153)}

\date{\today}

\begin{abstract}
We study the coset covering function $\cC(r)$ of a finitely generated group: the number of cosets of infinite index subgroups needed to cover the ball of radius $r$. We show that $\cC(r)$ is of order at least $\sqrt{r}$ for all groups. Moreover, we show that $\cC(r)$ is linear for a class of amenable groups including virtually nilpotent and polycyclic groups, and that it is exponential for property (T) groups.
\end{abstract}

\maketitle
\section{Introduction}

Let $G$ be an infinite discrete group generated by a finite symmetric set $S$. We say that a coset $C = H g$ is a coset of infinite index (c.i.i.) of $G$ if $H$ is an infinite index subgroup of $G$. Neumann's Lemma~\cite{neumann1954groups} states that $G$ cannot be covered by a finite number of c.i.i.s. We refine this question, and ask: how many c.i.i.s are needed to cover $B_r(G,S)$, the ball of radius $r$ in $G$? For each $r$, denote by $\cC(r)$ the smallest number of c.i.i.s needed to cover $B_r$. That is,
\begin{align*}
    \cC_{G,S}(r) = \min \left\{ N\,:\, \exists\text{ c.i.i.s } C_1,\ldots,C_N \text{ s.t.\ } B_r(G,S) \subseteq \cup_{i=1}^N C_i\right\}.
\end{align*}
We call $\cC_{G,S}$ the {\em coset covering function} of $(G,S)$. 

A number of natural questions  arise: What lower and upper bounds on the coset covering function apply to all finitely generated groups? And how can these bounds be improved for groups with particular algebraic or geometric properties? The goal of this note is to establish some results in this direction, as well as to advertise the study of this invariant. 

 Neumann's Lemma suggests (but does not immediately imply) that the coset covering function should tend to infinity with $r$ for any infinte group $G$. Our first result confirms that this intuition, and moreover provides a quantitative lower bound that applies to all groups.
\begin{maintheorem}
\label{prop:quant-neumann}
For any infinite, finitely generated group $(G,S)$, we have $\cC_{G,S}(r) \geq \sqrt{r}/(4|S|)$.
\end{maintheorem}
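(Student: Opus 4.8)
The plan is to run a random walk on $G$ for a random length and show that, if $B_r$ is covered by $N$ c.i.i.s $C_i=H_ig_i$, then the walk must land in one of them with probability bounded below, which forces $N$ to be large. Fix a covering achieving $N=\cC_{G,S}(r)$, let $(W_t)_{t\ge0}$ be the simple random walk on $G$ driven by i.i.d.\ uniform increments in $S$, pick $T$ uniform on $\{0,1,\dots,r-1\}$ independently, and set $W=W_T\in B_{r-1}$. Since every $W_t$ with $t<r$ lies in $B_r$, hence in some $C_i$, we get $\sum_{i=1}^N\Pr{W\in C_i}\ge 1$. It then suffices to bound each term by $4|S|/\sqrt r$, which immediately yields $N\ge\sqrt r/(4|S|)$.

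To estimate $\Pr{W\in C_i}$ I would push the walk forward to the Schreier graph $\Gamma_i$ of $H_i\backslash G$ with respect to $S$: this is a connected $|S|$-regular (multi)graph, and it is \emph{infinite} precisely because $[G:H_i]=\infty$. Writing $o_i=H_i$ for the base coset and $v_i=H_ig_i$, the image of $(W_t)$ is the simple random walk on $\Gamma_i$ started at $o_i$, and $W_t\in C_i$ if and only if this walk sits at $v_i$ at time $t$. Hence $\Pr{W\in C_i}=\tfrac1r\sum_{t=0}^{r-1}p_t^{\Gamma_i}(o_i,v_i)$, and a standard first-passage decomposition $p_t(o_i,v_i)=\sum_{s\le t}\phi_s\,p_{t-s}(v_i,v_i)$ with $\sum_s\phi_s\le1$ bounds this by $\tfrac1r\sum_{t=0}^{r-1}p_t^{\Gamma_i}(v_i,v_i)$, i.e.\ by $1/r$ times the expected number of returns to $v_i$ within $r$ steps.

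The technical heart is therefore an on-diagonal heat-kernel estimate: for the simple random walk on \emph{any} connected infinite $|S|$-regular graph, $\sum_{t=0}^{r-1}p_t(v,v)\le 4|S|\sqrt r$. Here the only geometric input available is the trivial isoperimetric fact that in an infinite connected graph every finite vertex set has at least one boundary edge. Feeding this into a Nash-type inequality — via the coarea formula (so $\sum_{\text{edges}}|h(x)-h(y)|\ge\max h$ for $h\ge0$ of finite support, applied to $h=f^2$) together with Cauchy--Schwarz — gives $\mathcal{E}(f)\gtrsim \|f\|_\infty^4/(|S|^2\|f\|_2^2)$; applied to $f=P^m\delta_v$, using $\|f\|_\infty\ge p_{2m}(v,v)=\|f\|_2^2$, this produces the recursion $q_m-q_{m+1}\gtrsim q_m^3/|S|^2$ for $q_m=p_{2m}(v,v)$, hence the diffusive bound $q_m\lesssim |S|/\sqrt m$ and then $\sum_{t<r}p_t(v,v)\lesssim|S|\sqrt r$. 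One point genuinely needs care: for bipartite graphs $I-P^2$ is not comparable to $I-P$, so the recursion should be run for the lazy walk (degree $2|S|$) and transferred back, which is where the precise constant $4|S|$ is meant to appear. Combining everything, $1\le\sum_{i=1}^N\Pr{W\in C_i}\le N\cdot\tfrac1r\cdot4|S|\sqrt r$, so $\cC_{G,S}(r)=N\ge\sqrt r/(4|S|)$.

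I expect the one genuinely substantial step to be the heat-kernel bound of the third paragraph, and in particular the bookkeeping needed to land the constant exactly at $4|S|$ rather than some unspecified $C|S|$; the rest — the random-length walk, the union bound, the pushforward to Schreier graphs, and the first-passage reduction — is soft. It is also worth noting that the heat-kernel step is the only place where infiniteness of $G$ is used, entering solely through the elementary boundary-edge observation.
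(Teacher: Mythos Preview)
Your approach is correct and is essentially the paper's: push the simple random walk to the Schreier graph $H_i\backslash G$, use the universal $n^{-1/2}$ heat-kernel decay valid on any infinite connected regular graph, and union-bound over the $N$ cosets. The paper's execution is a bit leaner in two respects. First, it works at the single fixed time $n=r$ rather than averaging over $\{0,\dots,r-1\}$; second, it quotes Lyons' result \cite{lyons2005asymptotic} directly for the \emph{off-diagonal} bound $\Pr{Z_n\in H_ig_i}\le 4|S|/\sqrt{n}$ (with the constant $4$ already supplied there), so neither your first-passage reduction to the diagonal nor the constant bookkeeping in the Nash step is needed. Your route has the compensating merit of being self-contained---you sketch the Nash/isoperimetric proof of the heat-kernel bound rather than citing it---and the time-averaging plus first-passage device is a standard and clean way to pass from on-diagonal to off-diagonal control when only the former is in hand.
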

The proof of this claim is a probabilistic argument that relies on the analysis of a random walk on the group, and follows from a result of Lyons~\cite{lyons2005asymptotic}. Using the same argument, the universal lower bound in Theorem \ref{prop:quant-neumann} can be improved when the distance from the origin of the random walk satisfies some upper bound with positive probability; see Theorem \ref{t-speed}.

Consider the smallest infinite group  $G=\Z$ and the standard generating set. Here $\cC_{G,S}(r) = 2r+1$, since the only infinite index subgroup is the trivial one.    Our next result shows that the coset covering function grows linearly for a class of amenable groups, namely all groups that admit a ``cautious random walk'' in the terminolgy of Erschler and Zheng \cite{erschler2021drift}. We recall the definition  below and mention here that, by  results of Tessera \cite{tessera2011asymptotic,tessera2013isoperimetric} and Erschler--Zheng  \cite{erschler2020isoperimetric}  this class includes  virtually nilpotent and polycyclic groups, solvable Baumslag-Solitar groups, lamplighter groups $F \wr \Z$ where $F$ is a finite group, solvable groups of finite Pr\"ufer rank and various lattices in solvable algebraic $p$-adic groups.
\begin{maintheorem}
\label{thm:poly}
For any infinite, finitely generated,  group $(G,S)$ that admits a cautious random walk there exist constants $c_1,c_2 > 0$ such that
\begin{align*}
    c_1 r \leq \cC_{G,S}(r) \leq c_2 r.
\end{align*}
\end{maintheorem}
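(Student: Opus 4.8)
The two inequalities call for essentially different arguments, so I would treat them separately. For the lower bound $\cC_{G,S}(r) \ge c_1 r$ I would use Theorem~\ref{t-speed}. The role of the cautious random walk hypothesis (whose definition is recalled below) is to furnish a symmetric, finitely supported probability measure $\mu$ on $G$ whose random walk $(X_n)$ has at most \emph{diffusive} displacement: there exist constants $A, p > 0$ such that $\Pr{|X_n|_S \le A\sqrt{n}} \ge p$ for all large $n$ --- this is exactly the sense in which a cautious walk declines to escape from the origin. Applying Theorem~\ref{t-speed} at radius $t = A\sqrt{n}$ then gives $\cC_{G,S}(A\sqrt{n}) \ge p\sqrt{n}/C$ for a constant $C = C(\mu, S) > 0$, i.e.\ the coset covering function is at least linear along the scale $m = A\sqrt{n}$; since $r \mapsto \cC_{G,S}(r)$ is non-decreasing, interpolating between consecutive scales upgrades this to $\cC_{G,S}(r) \ge c_1 r$ for every $r$. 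What produces a \emph{linear} bound here, rather than just $\sqrt{r}$, is the matching of the $\sqrt{\cdot}$ rate in Theorems~\ref{prop:quant-neumann} and~\ref{t-speed} (which ultimately comes from Lyons' estimate on the decay of the probability of lying in a fixed c.i.i.) against the $\sqrt{\cdot}$ scale of a cautious walk; a strictly faster walk, such as the simple random walk on a solvable Baumslag--Solitar group or on $F \wr \Z^2$, would only give a weaker exponent, which is precisely why one assumes cautiousness and not merely amenability.

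For the upper bound $\cC_{G,S}(r) \le c_2 r$ the plan is to exhibit, for each $r$, an explicit family of $O(r)$ c.i.i.s covering $B_r$. First I would record two soft reductions: $\cC_{G,S}$ changes by at most bounded multiplicative factors under a change of finite generating set, and $\cC_{G,S}(r) \asymp \cC_{G_0, S_0}(r)$ for any finite-index subgroup $G_0 \le G$ --- indeed any infinite-index subgroup of $G_0$ has infinite index in $G$, and intersecting a c.i.i.\ of $G$ with $G_0$ splits it into at most $[G:G_0]$ c.i.i.s of $G_0$. Hence one may freely pass to a finite-index subgroup. It then suffices to produce one infinite-index subgroup $H \le G$ whose coset space has linear growth, meaning $\#\{Hg : Hg \cap B_r \neq \emptyset\} = O(r)$; for then $B_r$ is covered by $O(r)$ cosets of the single infinite-index subgroup $H$. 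The clean choice is $H = \ker\phi$ for a surjective homomorphism $\phi : G \to \Z$: then $\{Hg : Hg \cap B_r \neq \emptyset\}$ is in bijection with $\phi(B_r) \subseteq \{-Lr, \ldots, Lr\}$, where $L = \max_{s \in S}|\phi(s)|$, and of course $[G:\ker\phi] = \infty$. Thus the upper bound reduces to the assertion that $G$ has a finite-index subgroup surjecting onto $\Z$; for the families listed after the theorem statement (virtually nilpotent, polycyclic, lamplighter groups $F \wr \Z$, solvable Baumslag--Solitar groups, solvable groups of finite Pr\"ufer rank, and the relevant $p$-adic lattices) this is classical --- for instance an infinite finitely generated nilpotent group has infinite abelianization --- and for the general class of groups admitting a cautious random walk it should follow from their structure theory.

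The step I expect to be the genuine obstacle is the lower bound: one has to extract from the definition of a cautious random walk the precise quantitative displacement estimate above, identify the measure $\mu$ it applies to, and verify that Theorem~\ref{t-speed} is available for that $\mu$ --- not merely for the simple random walk --- with constants one can actually use. By contrast the upper bound is essentially bookkeeping once a virtual surjection onto $\Z$ is secured; the only point there that is not purely formal is checking that such a surjection exists for every group in the cautious class.
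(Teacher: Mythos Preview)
Your approach is exactly the paper's: lower bound via Theorem~\ref{t-speed} with $f(n)\sim\sqrt{n}$, upper bound via a virtual surjection onto $\Z$ combined with Claims~\ref{lem:finite_index_bounds}--\ref{lem:quotient_upper_bound}. Your assessment of where the difficulty lies is inverted, however. The lower bound is immediate: the definition of a cautious random walk is precisely that $\inf_n\Pr{Z_n\in B_{\varepsilon\sqrt{n}}(G,\supp\mu)}>0$ for every $\varepsilon>0$, so there is nothing to ``extract'' --- one plugs $f(n)=\varepsilon\sqrt{n}$ directly into Theorem~\ref{t-speed} (which is already stated for an arbitrary finitely supported symmetric non-degenerate $\mu$) and passes to the given $S$ via $\cC_{G,S}\sim\cC_{G,\supp\mu}$. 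The substantive input is on the upper-bound side: that every group admitting a cautious random walk virtually surjects onto $\Z$ is not soft ``structure theory'' but the combination of two deep results --- Erschler--Ozawa (cautious $\Rightarrow$ Shalom's property $H_{FD}$) and Shalom (amenable with $H_{FD}$ $\Rightarrow$ virtual surjection onto $\Z$), together with the observation that cautiousness forces amenability via Kesten's criterion.
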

The upper bound in the previous theorem is a consequence of the deep fact that a group which admits a cautious random walk must admit a finite index subgroup which surjects onto $\Z$. This follows from Shalom's property $\mathsf{H_{FD}}$ \cite{shalom2004harmonic}  established for such groups by  Erschler--Ozawa \cite{erschler2018finite-dimensional}. It is indeed  easy to see that $\cC_{G,S}(r)\le cr$ for any group which virtually surjects onto $\Z$  (see Claims \ref{lem:finite_index_bounds}-\ref{lem:quotient_upper_bound}). Note however that in all the special cases mentioned above, the existence of a virtual surjection to $\Z$ is obvious without appealing to Shalom's property. The lower bound follows instead from Theorem \ref{t-speed}.  For virtually nilpotent groups, we provide an alternative proof in \S \ref{sec:vn} which does not rely on random walks.


Finally, we note that the function $\cC_{G, S}(r)$ is always bounded above by an exponential function, since the ball $B_r(G,S)$ can be always covered by points, i.e. cosets of the trivial subgroups. Our last result shows that for some groups, the function $\cC_{G, S}(r)$ may actually grow exponentially.

\begin{maintheorem}
\label{thm:t}
For any infinite, finitely generated group $(G,S)$ with property (T) there exists a constant $\varepsilon$ such that
\begin{align*}
    \cC_{G,S}(r) \geq \ee^{\varepsilon r}.
\end{align*}
\end{maintheorem}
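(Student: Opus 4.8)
The plan is to run the random-walk argument behind Theorem~\ref{prop:quant-neumann}, now exploiting the spectral gap that property (T) furnishes \emph{uniformly} over all coset spaces. Fix a Kazhdan constant $\kappa=\kappa(G,S)>0$: for every unitary representation $\pi$ of $G$ with no nonzero invariant vector and every unit vector $v$, $\max_{s\in S}\|\pi(s)v-v\|\ge\kappa$. Let $\mu$ be uniform on $S$ and let $\mu'=\tfrac12\delta_e+\tfrac12\mu$ be the corresponding lazy step distribution; passing to the lazy walk is what lets us control the bottom of the spectrum, since property (T) alone does not forbid an approximate eigenvalue near $-1$ for $\pi(\mu)$. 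As $S$ is symmetric, $\pi(\mu')$ is self-adjoint, and from $\operatorname{Re}\langle\pi(s)v,v\rangle=1-\tfrac12\|\pi(s)v-v\|^2$ one gets, for every unit vector $v$,
\begin{align*}
0\;\le\;\langle\pi(\mu')v,v\rangle\;=\;1-\frac{1}{4|S|}\sum_{s\in S}\|\pi(s)v-v\|^2\;\le\;1-\frac{\kappa^2}{4|S|},
\end{align*}
so that $\|\pi(\mu')\|\le\rho:=1-\kappa^2/(4|S|)<1$ for every such $\pi$.

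Next I would bound the probability that the lazy walk $X'_r$, started at $e$, lands in a single coset of infinite index. Writing a c.i.i.\ as $C=H_0g_0=g_0H$ with $H=g_0^{-1}H_0g_0$ again of infinite index, it suffices to treat left cosets $gH$ with $[G:H]=\infty$. Since $G/H$ is infinite, the quasi-regular representation $\pi$ on $\ell^2(G/H)$ has no nonzero invariant vector, so the estimate above applies to it. Writing $o=eH$ for the base point and using $\{X'_r\in gH\}=\{X'_r\cdot o=g\cdot o\}$ together with $\pi(\mu'^{*r})=\pi(\mu')^r$,
\begin{align*}
\Pr{X'_r\in gH}\;=\;\langle\pi(\mu')^r\delta_o,\delta_{g\cdot o}\rangle\;\le\;\|\pi(\mu')\|^r\,\|\delta_o\|\,\|\delta_{g\cdot o}\|\;\le\;\rho^r.
\end{align*}

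Finally, I would conclude by a union bound. If $B_r(G,S)\subseteq\bigcup_{i=1}^N C_i$ with each $C_i$ a c.i.i., then, because the lazy walk satisfies $|X'_r|\le r$ almost surely, $X'_r\in B_r$ with probability $1$; hence $1=\Pr{X'_r\in B_r}\le\sum_{i=1}^N\Pr{X'_r\in C_i}\le N\rho^r$, i.e.\ $\cC_{G,S}(r)\ge\rho^{-r}=\ee^{\varepsilon r}$ with $\varepsilon=\log(1/\rho)>0$.

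The one substantive point — as opposed to bookkeeping — is the uniformity in the second step: a single $\rho<1$ must work simultaneously for the whole family of representations $\ell^2(G/H)$, $[G:H]=\infty$. This is precisely what property (T) buys us (a uniform Kazhdan constant across all representations without invariant vectors), and it is exactly the step that breaks down for amenable or merely non-amenable groups, consistent with Theorem~\ref{thm:poly}. The only technical wrinkle is the passage to the lazy walk needed to handle the $-1$ end of the spectrum of $\pi(\mu)$; everything else is routine.
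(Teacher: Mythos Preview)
Your proof is correct and follows essentially the same approach as the paper: a uniform spectral gap on the quasi-regular representations $\ell^2(G/H)$, coming from property (T), yields exponential decay of hitting probabilities of cosets, and a union bound finishes. The only cosmetic differences are that the paper handles the $-1$ end of the spectrum by assuming without loss of generality that $e\in S$ (equivalent to your passage to the lazy walk $\mu'$) and cites \cite{brown2008Calgebras} for the operator-norm bound, whereas you derive the explicit estimate $\varepsilon=\log\!\big(1-\kappa^2/(4|S|)\big)^{-1}$ directly from the Kazhdan constant.
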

The constant $\varepsilon$ can be bounded from below by a monotone transformation of the Kazhdan constant of $(G, S)$.


\subsection{Open questions} We conclude this introduction by suggesting some problems for further investigation. 

We are not aware of any examples in which the coset covering function achieves the bound of Theorem~\ref{prop:quant-neumann}, or indeed of any in which it is sub-linear. Thus the following natural question is open:
\begin{question} \label{q-sublinear}
Does there exist an infinite, finitely generated group $(G,S)$ such that $\liminf_r \frac{1}{r}\cC_{G,S}(r)=0$?
\end{question}
Note that by Claim~\ref{lem:quotient_upper_bound} below, it is equivalent to consider free groups, because if any group with $d$ generators has a small covering function, then so does the free group with $d$ generators. Moreover, by Claim~\ref{lem:finite_index_bounds}, it suffices to consider the free group with two generators, since it contains every finitely generated free group with more generators as a finite index subgroup.

One difficulty in estimating the coset covering function is to control the cosets of different subgroups at the same time. However in some situations the cardinality of a minimal cover of $B_r(G, S)$ by c.i.i.s can be achieved (up to constants) by considering only covers by cosets of a single subgroup infinite index (s.i.i.): this is the case for the groups covered by Theorems \ref{thm:poly} and \ref{thm:t}. It is natural to wonder whether this is a general fact. To formalise this, let us introduce the following invariant
\begin{align*}
    \mathfrak{D}_{G,S}(r) = \min \left\{ N\,:\, \exists H \text{ s.i.i}
   \text{ and } g_1,\ldots,g_N \text{ s.t.\ } B_r(G,S) \subseteq \cup_{i=1}^N g_i H \right\}.
\end{align*}
Note that $\mathfrak{D}_{G, S}(r)$ has a  geometric interpretation: it is the  cardinality of the smallest ball of radius $r$ in the Schreier graph of $G/H$, when $H$ varies among all s.i.i.s of G (in contrast, $\cC_{G, S}$ lacks such an interpretation). In particular $\mathfrak{D}_{G, S}(r)\ge r$ trivially. We clearly have 
\[\cC_{G, S}(r)\le \mathfrak{D}_{G, S}(r)\]
so that we are lead to ask if the previous inequality can be strict asymptotically.
\begin{question}\label{q-single-subgroup}
Does there exists an infinte, finitely generated group $(G,S)$ such that $\liminf_{r\to \infty} \frac{\cC_{G, S}(r)}{\mathfrak{D}_{G, S}(cr)}=+\infty$ for every $c>0$?
\end{question}
Note that any example $(G, S)$ providing an affirmative answer to Question \ref{q-sublinear} would necessarily give  an affirmative answer to Question \ref{q-single-subgroup}.

Finally we propose the following concrete problem as a test case for the above questions. 

\begin{problem}
Estimate the coset covering function of the Grigorchuk group. 
\end{problem}

\subsection*{Acknowledgments} 
We thank Russ Lyons for introducing us to useful recent results in random walks.

\section{Preliminaries}

To simplify notation, define the equivalence relation $\sim$ on the space of functions $\N \to \N$ by $f \sim g$ if and only if there exist constants $c_1, c_2 > 0$ such that 
\begin{align*}
  c_1 f(c_1 r) \leq g(r) \leq c_2 f(c_2 r) \quad \text{ for all } r.
\end{align*}
A standard argument shows that if $S$ and $T$ generate $G$ then $\cC_{G,S} \sim \cC_{G,T}$. Accordingly, when we consider $\cC_{G,S}$ up to equivalence, we will write $\cC_G$ and omit the generating set.

\subsection{The covering function of finite index subgroups and of quotients}

\begin{claim}\label{lem:finite_index_bounds}
If $[G : H] < \infty$, then $\cC_G \sim \cC_H$. 

\end{claim}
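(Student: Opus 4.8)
The plan is to work with conveniently chosen generating sets on both sides. Since $\cC$ is independent of the generating set up to $\sim$, fix a finite symmetric generating set $S$ of $G$, a right transversal $\{t_1=e,t_2,\dots,t_m\}$ of $H$ in $G$ (so $G=\bigsqcup_{j\le m}Ht_j$, $m=[G:H]$), set $D=\max_j|t_j|_S$, and take $T$ to be the finite symmetric set of Schreier generators $\{t_i s t_j^{-1}\in H : s\in S\}$; this generates the (again infinite and finitely generated) group $H$. A standard Reidemeister-rewriting argument gives the two-sided comparison $d_T(e,h)\le d_S(e,h)\le(2D+1)\,d_T(e,h)$ for all $h\in H$, hence $B^S_r(G)\cap H\subseteq B^T_r(H)$ and $B^T_r(H)\subseteq B^S_{(2D+1)r}(G)$.

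Next I record the stability of infinite index with respect to passing to and from $H$: if $K\le G$ has infinite index then $[H:K\cap H]=\infty$ (otherwise $[G:K\cap H]=[G:H][H:K\cap H]<\infty$, forcing $[G:K]<\infty$), and conversely if $L\le H$ has infinite index then $[G:L]=[G:H][H:L]=\infty$. Moreover, intersecting a coset with $H$ behaves well: if $C=Kg$ meets $H$, choose $h\in C\cap H$; then $C=Kh$ and $C\cap H=(K\cap H)h$, a coset of $K\cap H$ in $H$. Thus restriction to $H$ sends c.i.i.s of $G$ (when they meet $H$) to c.i.i.s of $H$, and right-translation of a c.i.i.\ of $H$ by any $t_j$ gives a c.i.i.\ of $G$.

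For $\cC_H\lesssim\cC_G$: take a minimal cover $B^S_{(2D+1)r}(G)\subseteq\bigcup_{i=1}^N C_i$ with $N=\cC_{G,S}((2D+1)r)$, intersect all $C_i$ with $H$, and discard the empty ones; by the previous paragraph the survivors are c.i.i.s of $H$ covering $B^S_{(2D+1)r}(G)\cap H\supseteq B^T_r(H)$, so $\cC_{H,T}(r)\le\cC_{G,S}((2D+1)r)$. For the reverse direction, write each $x\in B^S_r(G)$ as $x=ht_j$ with $h=xt_j^{-1}\in B^S_{r+D}(G)\cap H\subseteq B^T_{r+D}(H)$, so $B^S_r(G)\subseteq\bigcup_{j=1}^m B^T_{r+D}(H)\,t_j$; covering $B^T_{r+D}(H)$ by $\cC_{H,T}(r+D)$ c.i.i.s of $H$ and right-translating each piece by $t_j$ yields $\cC_{G,S}(r)\le m\cdot\cC_{H,T}(r+D)$. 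Since $\cC$ is non-decreasing, combining the two bounds (absorbing the bounded range $r\le D$ into the constants) gives $\cC_G\sim\cC_H$.

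I do not expect a genuine obstacle here; the statement is essentially bookkeeping. The step needing the most care is the one that makes the argument uniform over \emph{all} infinite-index subgroups simultaneously: one must verify that both restriction to $H$ and enlargement/translation from $H$ carry c.i.i.s to c.i.i.s, which is precisely the elementary index computation in the second paragraph. The metric comparison via Schreier generators is standard but is the most computational ingredient, and is what lets us convert balls in $G$ to balls in $H$ and back with only a multiplicative loss.
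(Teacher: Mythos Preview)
Your argument is correct and follows essentially the same route as the paper: compare balls in $H$ and $G$ via compatible generating sets, then obtain one inequality by intersecting a $G$-cover with $H$ and the other by translating an $H$-cover by a transversal. The only cosmetic difference is that the paper fixes a generating set of $H$ and enlarges it to one of $G$, whereas you fix a generating set of $G$ and pass to Schreier generators of $H$; this shifts the multiplicative constant from one inequality to the other but changes nothing structurally.
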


\begin{proof}

Let $S$ be a generating set for $H$ and $T \supseteq S$ a generating set for $G$. We will show that
\begin{align}
\label{eq:finite-index}
\cC_{H, S}(r) \leq \cC_{G, T}(r) \leq [G : H]\cC_{H, S}(Cr + D),
\end{align}
for some $C, D>0$.
Fix $r$. Let $\ell = [G : H]$ and let $H g_1, \ldots, H g_\ell$ be the cosets of $H$ in $G$. 

To prove the first inequality in \eqref{eq:finite-index} we will construct a covering of $B_r(H, S)$ by $\cC_{G, T}(r)$ c.i.i.s.

Denote by $|\cdot|_S$ and $|\cdot|_T$ word lengths with respect to $S$ in $G$ and $T$ in $H$, respectively. Note that for any $h \in H$, we have $|h|_T \leq |h|_S$, since $S \subseteq T$. Therefore $B_r(H, S) \subseteq B_r(G, T)$.
Suppose $B_r(G, T) \subseteq \bigcup_{i=1}^m K_i g_i$, where each $K_i g_i$ is a c.i.i.\@ in $G$. Then \begin{align*}
B_r(H, S) &\subseteq B_r(G, T) \cap H \subseteq H \cap \bigcup_{i=1}^m K_i g_i 
= \bigcup_{i=1}^m (H \cap K_i) h_i
\end{align*}
for some $h_1,\ldots,h_\ell \in H$. Moreover, $[H : H\cap K_i] = \infty$, and each $(H \cap K_i) h_i$ is a c.i.i, and $\cC_{H, S}(r) \leq \cC_{G, T}(r)$.

We now turn to the second inequality in \eqref{eq:finite-index}. Since the inclusion of $(H, S)$ inside $(G, T)$ is a quasi-isometry, we can fix a constant $C>0$ such that $H \cap B_r(G, T) \subseteq B_{Cr+C}(H, S)$. We will construct a covering of $B_r(G, T)$ by $[G : H]\cC_{H, S}(Cr + D)$ c.i.i.s, where $D=C(k+1)$ and  $k = \max_{1 \leq i \leq \ell} |g_i|_T$. 

Let $m = \cC_{H, S}(Cr + D)$, and let  $B_{Cr + D}(H, S) \subseteq \bigcup_{i=1}^m K_i h_i$, with each $ K_i h_i$ a c.i.i.\ in $H$. Then \begin{align*}
    B_r(G, T) &= G \cap B_r(G, T)\\ 
    &= \left(\bigcup_{i=1}^\ell H g_i\right) \cap B_r(G, T) \\
    &= \bigcup_{i=1}^\ell (H \cap B_r(G, T) g_i^{-1}) g_i.
\end{align*}
Because $g_i \in B_k(G, T)$, $B_r(G, T) g_i^{-1} \subseteq B_{r + k}(G, T)$. On the other hand, \begin{align*}
H \cap B_{r + k}(G, T) \subseteq B_{C(r + k)+C}(H, S)=B_{Cr + D}(H, S) \subseteq \bigcup_{i=1}^m K_i h_i.
\end{align*}
Therefore \begin{align*}
B_r(G, T) \subseteq \bigcup_{i=1}^\ell (H \cap B_{r + k}(G, T)) g_i \subseteq \bigcup_{i=1}^\ell \left(\bigcup_{j=1}^m K_j h_j\right) g_i = \bigcup_{i, j}  K_j h_j g_i.
\end{align*}
The number of cosets in this cover is at most $\ell m = [G : H]\cC_{H, S}(Cr + D)$.
\end{proof}

\begin{claim}\label{lem:quotient_upper_bound}
If $Q = G/N$ is a quotient of $G$ and $\varphi \colon G \to Q$ is the quotient map, then 
\begin{align*}
  \cC_{G, S}(r) \leq \cC_{Q, \varphi(S)}(r).
\end{align*}
\end{claim}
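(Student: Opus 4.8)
The plan is to pull back, via $\varphi$, an optimal cover of the ball in $Q$ to a cover of the ball in $G$ of the same cardinality. If $\cC_{Q,\varphi(S)}(r)=\infty$ there is nothing to prove, so assume it is finite and set $N=\cC_{Q,\varphi(S)}(r)$. Fix c.i.i.s $C_1,\ldots,C_N$ of $Q$ with $B_r(Q,\varphi(S))\subseteq\bigcup_{i=1}^N C_i$, and write each $C_i=K_i q_i$ with $[Q:K_i]=\infty$.

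First I would record the elementary fact that $\varphi^{-1}(K_i)$ is an infinite index subgroup of $G$. Indeed $N=\ker\varphi\subseteq\varphi^{-1}(K_i)$, so by the correspondence theorem $\varphi$ induces a bijection $G/\varphi^{-1}(K_i)\to Q/K_i$ and hence $[G:\varphi^{-1}(K_i)]=[Q:K_i]=\infty$. Consequently, choosing any lift $\tilde q_i\in\varphi^{-1}(q_i)$, the preimage
\begin{align*}
\varphi^{-1}(C_i)=\varphi^{-1}(K_i q_i)=\varphi^{-1}(K_i)\,\tilde q_i
\end{align*}
is a c.i.i.\ of $G$.

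Next I would check the ball inclusion. Since $\varphi$ is a surjective homomorphism sending $S$ into $\varphi(S)$, any word of length at most $r$ in $S$ representing $g\in G$ maps to a word of length at most $r$ in $\varphi(S)$ representing $\varphi(g)$; thus $\varphi(B_r(G,S))\subseteq B_r(Q,\varphi(S))$, equivalently $B_r(G,S)\subseteq\varphi^{-1}\bigl(B_r(Q,\varphi(S))\bigr)$. Combining this with the chosen cover of $B_r(Q,\varphi(S))$ gives
\begin{align*}
B_r(G,S)\subseteq\varphi^{-1}\Bigl(\bigcup_{i=1}^N C_i\Bigr)=\bigcup_{i=1}^N\varphi^{-1}(C_i),
\end{align*}
a cover of $B_r(G,S)$ by $N$ c.i.i.s, so that $\cC_{G,S}(r)\le N=\cC_{Q,\varphi(S)}(r)$.

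There is no serious obstacle here; the argument is a routine application of the correspondence theorem together with the monotonicity of word length under a homomorphism. The only points that warrant care are the verification that preimages of infinite index subgroups again have infinite index (handled above) and the observation that when $Q$ is finite the right-hand side is vacuously $+\infty$, so the stated inequality is automatic in that case as well.
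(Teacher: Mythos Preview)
Your proof is correct and follows essentially the same approach as the paper: pull back an optimal c.i.i.\ cover of $B_r(Q,\varphi(S))$ through $\varphi$, using that $\varphi^{-1}(K_iq_i)$ is a coset of the infinite-index subgroup $\varphi^{-1}(K_i)$ and that $B_r(G,S)\subseteq\varphi^{-1}(B_r(Q,\varphi(S)))$. Your version is slightly more careful in spelling out the correspondence theorem and the trivial $\cC_{Q,\varphi(S)}(r)=\infty$ case, but the argument is the same.
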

\begin{proof}
Fix $r$, and let $m = \cC_{Q, \varphi(S)}(r)$. Say $B_r(Q, \varphi(S)) \subseteq \bigcup_{i=1}^m K_i g_i$, with each $K_i g_i$ a c.i.i.\ in $Q$. We will show that this covering can be lifted to a covering of $B_r(G, S)$ by c.i.i.s in $G$.

First note that $\varphi(B_r(G, S)) = B_r(Q, \varphi(S))$. 
It follows that 
\begin{align*}
  B_r(G, S) &\subseteq \varphi^{-1}(B_r(Q, \varphi(S))) \subseteq \varphi^{-1}\left(\bigcup_{i=1}^m K_i g_i\right) = \bigcup_{i=1}^m \varphi^{-1}(K_i g_i).
\end{align*}
Now note that each preimage $\varphi^{-1}(K_i g_i)$ corresponds to a coset of $\varphi^{-1}(K_i)$. Moreover, since each $K_i$ has infinite index in $Q$, each $\varphi^{-1}(K_i)$ has infinite index in $G$. These thus yield a covering of $B_r(G, S)$ by $m = \cC_{Q, \varphi(S)}(n)$ c.i.i.s.
\end{proof}


\subsection{Random walks on groups}
Given a finitely generated $G$, let $\mu$ a finitely supported, symmetric, non-degenerate probability measure on $G$; the latter condition means that the support of $\mu$ generates $G$ as a semigroup. Denote $\min\mu=\min\{\mu(g)\,:\, \mu(g)>0\}$. Let $X_1,X_2,\ldots$ be i.i.d.\ random variables with distribution $\mu$. Let $Z_n = X_1 \cdot X_2 \cdots X_n$ be the $\mu$-random walk on $G$.

We will need the following claim, which is a simple consequence of \cite[Lemma 3.4]{lyons2005asymptotic}.
\begin{claim}
\label{clm:lyons}
Let $(G, \mu)$ be as above. Let $H$ be an infinite index subgroup of $G$. Then for any coset $H g$ and any $n$,
\begin{align*}
    \Pr{Z_n \in H g} \leq \frac{4}{\min\mu}\cdot\frac{1}{\sqrt{n}}.
\end{align*}
\end{claim}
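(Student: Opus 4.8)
The statement bounds the probability that a symmetric random walk lands in a fixed coset $Hg$ of an infinite-index subgroup. My approach is to reduce everything to the Schreier graph $G/H$ and apply Lyons's Lemma 3.4 there, which controls return probabilities of the induced random walk on a transient-like transitive-ish structure. First I would consider the left action of $G$ on the coset space $G/H$ and the induced Markov chain $\bar Z_n = Z_n H$ (more precisely, I want the walk on right cosets $H\backslash G$ so that multiplication on the right by $X_{n+1}$ is well-defined; since $\mu$ is symmetric this is harmless). The key point is that $\{Z_n \in Hg\}$ is the event that the induced walk, started at the coset $H$ (i.e. the identity coset), is at the coset $Hg$ at time $n$. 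Because $\mu$ is symmetric, the induced chain on $H\backslash G$ is reversible with respect to the counting measure (each coset has the same ``mass'' since the chain is a quotient of the symmetric random walk on $G$). So I can write $\Pr{Z_n\in Hg}$ in terms of the transition kernel $p_n(H, Hg)$ of this reversible chain.

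Next I would invoke reversibility to pass to the diagonal: by Cauchy–Schwarz / the semigroup property, $p_n(H,Hg)^2 \le p_{2n}(H,H)\, p_{2n}(Hg,Hg)$, and by transitivity of the $G$-action on $H\backslash G$ the return probability $p_{2n}(Hg,Hg)$ does not depend on the coset, so $p_n(H,Hg) \le p_{2n}(H,H)$. Since $H$ has infinite index, $H\backslash G$ is infinite, so the chain is an infinite reversible Markov chain, and I expect Lyons's Lemma 3.4 to say precisely that for such a chain the $2n$-step return probability is $O(1/\sqrt{n})$ — indeed $p_{2n}(H,H) \le C/\sqrt{n}$ with a constant depending only on the minimal transition probability, which here is at least $\min\mu$. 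Tracking the constant through Lyons's statement should give the factor $4/(\min\mu \sqrt n)$; this is the step where I would need to look up the precise normalization in \cite{lyons2005asymptotic} and make sure the constant $4$ really comes out (possibly after absorbing the $\sqrt 2$ from the $2n$ vs $n$ into the stated bound, or because Lyons already states it for $n$-step probabilities of lazy-type chains).

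**Main obstacle.** The genuine content is entirely imported from \cite[Lemma 3.4]{lyons2005asymptotic}, so the ``hard part'' is not hard mathematics but rather correctly matching hypotheses and normalizations: (i) checking that the induced chain on $H\backslash G$ is exactly of the form Lyons's lemma applies to (reversible, infinite state space, uniformly bounded conductances from below by $\min\mu$), and (ii) getting the constant $4$ rather than some other absolute constant. A secondary subtlety is the left-vs-right coset bookkeeping: the random walk $Z_n = X_1\cdots X_n$ is most naturally viewed acting on the left, so $\{Z_n \in Hg\}$ should be reorganized as a statement about the orbit of the identity coset under right multiplications, using symmetry of $\mu$ to identify the relevant chain with a reversible one. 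Once those alignment issues are settled, the proof is a two-line deduction: restate the event as a $2n$-step return probability bound via reversibility and transitivity, then quote Lyons.
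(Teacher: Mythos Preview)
Your reduction to the induced Markov chain on $H\backslash G$ is exactly the paper's approach: one checks that $(HZ_n)_n$ is a reversible Markov chain on an infinite state space with uniform stationary measure and off-diagonal transition probabilities bounded below by $\min\mu$, then quotes Lyons's Lemma~3.4. However, you insert an extra step that is both unnecessary and incorrect.

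The transitivity claim is false: the Schreier chain on $H\backslash G$ is in general \emph{not} vertex-transitive. The right action of $G$ on $H\backslash G$ is transitive on cosets, but it does not commute with the transition operator (which is itself given by right multiplication by a random generator); automorphisms of the chain come from \emph{left} multiplication, and left multiplication by $g'$ is well-defined on $H\backslash G$ only for $g'\in N_G(H)$. Concretely, $p_{2n}(Hg,Hg)=\Pr{Z_{2n}\in g^{-1}Hg}$, which has no reason to be independent of $g$. For instance, in $F_2=\langle a,b\rangle$ with $H=\langle a\rangle$, the vertex $H$ carries a self-loop labelled $a$ while the vertex $Hb$ does not, so their return probabilities differ.

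Fortunately the detour is unnecessary: Lyons's Lemma~3.4 (together with the remarks following it in \cite{lyons2005asymptotic}) already bounds $p_n(x,y)$ for \emph{every} target state $y$, not just the diagonal, by $4\pi(x)/(c\sqrt{n})$, where $c$ is the infimum of the nonzero edge weights $\pi(x)Q(x,y)$. Applying this directly with $x=H$, $y=Hg$, $\pi\equiv 1$ and $c\ge\min\mu$ gives the claim in one line with the stated constant~$4$. No Cauchy--Schwarz and no transitivity are needed.
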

\begin{proof}
  Lyons~\cite{lyons2005asymptotic} considers Markov chains on a countable state space $X$, that are reversible with respect to an infinite, positive measure $\pi$. Denote the transition matrix of such a chain by  $Q(\cdot,\cdot)$, and let $c = \inf\{\pi(x)Q(x,y)\,:\, x \neq y \text{ and } Q(x,y) > 0\}$. He shows that if such a chain starts at $x \in X$, then  the probability that it is at any particular $y \in X$ at time $n$ can be bounded from above by $\frac{4\pi(x)}{c\sqrt{n}}$ (See Lemma 3.4 and  remarks 3 and 4).
  
  The process $(H Z_n)_n$ is well known to be a Markov chain on the coset space $H \backslash G$. This Markov chain is furthermore easily seen to be reversible and to have an infinite uniform stationary measure $\pi(x)=1$; the infinitude of this measure is due to $H$ having infinite index in $G$. The transition probabilities are at least $\min\mu$ whenever they are non-zero, and thus this chain satisfies the conditions of the above mentioned result, and we can conclude that the probability that this Markov chain occupies any particular state at time $n$ is at most $\frac{4}{\min\mu}\frac{1}{\sqrt{n}}$.
\end{proof}

\section{Proofs}
We start with a proof of Theorem~\ref{prop:quant-neumann}. The idea of the proof is to consider a simple random walk on the group, and to show that if the set of c.i.i.s is too small then the random walk misses it with positive probability.
\begin{proof}[Proof of Theorem~\ref{prop:quant-neumann}]
Let $H_1g_1,H_2g_2,\ldots,H_N g_N$ be cosets of infinite index of $G$.

Let $\mu$ be the uniform distribution on $S$, so that $\min\mu = 1/|S|$. Let $(Z_n)_n$ be the $\mu$-random walk. It follows from Claim~\ref{clm:lyons} that for every $i$, 
\begin{align*}
    \Pr{Z_n \in H_i g_i} \leq \frac{4|S|}{\sqrt{n}}.
\end{align*}
By the union bound
\begin{align*}
    \Pr{Z_n \in \cup_i H_i g_i} \leq \frac{4|S| N}{\sqrt{n}}.
\end{align*}

It follows that if $N < \frac{\sqrt{n}}{4|S|}$ then with positive probability $Z_n$ is not in $\cup_i H_i g_i$. Since $Z_n \in B_n$ with probability 1, it follows that at least $\frac{\sqrt{r}}{4|S|}$ c.i.i.s are needed to cover the ball of radius $r$.

\end{proof}


Our next result uses the same idea to provide sharper lower bounds on $\cC_{G, S}$ for groups in which one can upper bound the distance of a random walk from the origin. Given a monotone increasing function $f\colon \N \to \N$, we write
\[f^*(n)=\max\{k \colon f(k)\leq n\}.\]

\begin{theorem}\label{t-speed}
Let $(Z_n)$ be a finitely supported, symmetric, non-degenerate $\mu$-random walk on $G$. Denote $S = \supp\mu$. Assume that $f\colon \N \to \N$ is a function such that 
\[\mathbb{P}[Z_n\in B_{f(n)}(G, S) ]>c\]
for some $c>0$. Then 
\[\cC_{G, S}(r) \geq \frac{c\min\mu }{4}\sqrt{f^*(r)}.\]

\end{theorem}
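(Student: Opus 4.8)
The plan is to run exactly the probabilistic argument from the proof of Theorem~\ref{prop:quant-neumann}, but to replace the trivial input ``$Z_n$ lies in $B_n$ with probability $1$'' by the stronger hypothesis ``$Z_n$ lies in $B_{f(n)}$ with probability at least $c$''. Fix $r$ and let $H_1 g_1,\ldots,H_N g_N$ be an arbitrary collection of c.i.i.s with $B_r(G,S)\subseteq\bigcup_{i=1}^N H_i g_i$; the goal is to force $N$ to be large.

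First I would set $n=f^*(r)=\max\{k:f(k)\le r\}$, assuming this is positive (if there is no $k$ with $f(k)\le r$, or $f^*(r)=0$, the asserted bound is vacuous). By definition of $f^*$ we have $f(n)\le r$, hence $B_{f(n)}(G,S)\subseteq B_r(G,S)\subseteq\bigcup_{i=1}^N H_i g_i$. The hypothesis on $f$ then gives
\[
\Pr{Z_n\in\bigcup_{i=1}^N H_i g_i}\ \geq\ \Pr{Z_n\in B_{f(n)}(G,S)}\ >\ c.
\]
On the other hand, applying Claim~\ref{clm:lyons} to each coset $H_i g_i$ (each $H_i$ has infinite index) and then the union bound yields
\[
\Pr{Z_n\in\bigcup_{i=1}^N H_i g_i}\ \leq\ \sum_{i=1}^N\Pr{Z_n\in H_i g_i}\ \leq\ \frac{4N}{\min\mu}\cdot\frac{1}{\sqrt{n}}.
\]
Combining the two displays gives $\frac{4N}{\min\mu\,\sqrt n}>c$, i.e. $N>\frac{c\min\mu}{4}\sqrt{n}=\frac{c\min\mu}{4}\sqrt{f^*(r)}$. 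Since the cover was arbitrary, $\cC_{G,S}(r)\geq\frac{c\min\mu}{4}\sqrt{f^*(r)}$, as claimed.

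I do not expect a genuine obstacle here: the theorem is a direct refinement of Theorem~\ref{prop:quant-neumann}. The only care needed is bookkeeping — checking that $f^*(r)$ is well defined and positive (using that $f$ is monotone increasing, and observing that the statement is trivially true for the finitely many small $r$ where this fails), and making sure the inequality $f(f^*(r))\le r$ is invoked in the correct direction so that indeed $B_{f(n)}(G,S)\subseteq B_r(G,S)$.
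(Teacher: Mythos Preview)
Your proof is correct and follows essentially the same route as the paper's: apply Claim~\ref{clm:lyons} and a union bound to compare $\Pr{Z_n\in B_{f(n)}}>c$ with the probability that $Z_n$ lies in the cover, then specialize to $n=f^*(r)$. The only cosmetic difference is that the paper first proves $\cC_{G,S}(f(n))\ge \frac{c\min\mu}{4}\sqrt{n}$ and then invokes monotonicity of $\cC_{G,S}$, whereas you fold this step into the inclusion $B_{f(n)}\subseteq B_r$.
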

\begin{proof}
Let $H_1g_1,H_2g_2,\ldots,H_N g_N$ be cosets of infinite index of $G$, and suppose that they cover the ball $B_{f(n)}(G, S)$, with $N=\cC_{G, S}(f(n))$. Then the event $\{Z_n\in B_{f(n)}(G, S)\}$ is contained in $\bigcup_{i=1}^N \{Z_n\in H_ig_i\}$. On the other hand, it follows from Claim~\ref{clm:lyons} that for every $i$, 
\begin{align*}
    \Pr{Z_n \in H_i g_i} \leq \frac{4}{\min\mu\sqrt{n}}.
\end{align*}
Thus, by the union bound
\begin{align*}
  c\leq  \Pr{Z_n \in  B_{f(n)}(G, S)}\leq \frac{4 N}{\min\mu\sqrt{n}}.
\end{align*}

It follows that $\cC_{G, S}(f(n))=N > \frac{c\min\mu}{4}\sqrt{n}$.
Set $n=f^*(r)$. Then $f(n)\leq r$ and since $\cC_{G, S}$ is monotone increasing we obtain
\[\cC_{G, S}(r) \geq \cC_{G, S}(f(n))\geq \frac{c\min\mu}{4}\sqrt{f^*(r)},\]
as desired. 
\end{proof}
\begin{remark}. \label{r-amenable}
The lower bound provided by Theorem \ref{t-speed} is  interesting (i.e. sharper than the general $\sqrt n$ bound in  Theorem \ref{prop:quant-neumann}) only if the function $f(n)$ satisifies $f(n)/n \to 0$ along a subsequence. A group satisfying this assumption is necessarily amenable, as can be seen by a standard application of Kesten's criterion.
\end{remark}

Recall that the speed of the random walk $(Z_n)$ is the function 
\[L_S(n)=\E{|Z_n|_S},\]
where $|\cdot|_S$ is the word norm on $(G, S)$. Then Theorem \ref{t-speed} readily implies the following.
\begin{corollary} \label{c-speed}
We have
\[\cC_{G, S}(r)\ge \frac{L_S^*(\lfloor r/2 \rfloor)^{\frac{1}{2}}}{8|S|}.\]
\end{corollary}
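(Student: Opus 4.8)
The plan is to deduce this directly from Theorem~\ref{t-speed} by finding an explicit function $f$ whose ball the walk occupies with a fixed positive probability, and then relating $f^*$ to $L_S^*$. The natural choice is to take $f(n) = 2 L_S(n)$ (rounded to an integer if needed), so that the event $\{Z_n \in B_{f(n)}(G,S)\}$ is exactly $\{|Z_n|_S \le 2L_S(n) = 2\E{|Z_n|_S}\}$. First I would invoke Markov's inequality on the nonnegative random variable $|Z_n|_S$: since $\E{|Z_n|_S} = L_S(n)$, we have $\Pr{|Z_n|_S \ge 2 L_S(n)} \le \tfrac12$, hence $\Pr{Z_n \in B_{2L_S(n)}(G,S)} \ge \tfrac12$. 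This gives the hypothesis of Theorem~\ref{t-speed} with $c = 1/2$ and $f(n) = 2L_S(n)$.

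Next, apply Theorem~\ref{t-speed}: with $\min\mu = 1/|S|$ (as $\mu$ is uniform on $S$ — or more generally $\min\mu \ge 1/|S|$ for any symmetric measure with support $S$, which suffices since the bound is monotone in $\min\mu$), we get
\[
\cC_{G,S}(r) \ge \frac{c \min\mu}{4}\sqrt{f^*(r)} = \frac{1}{8|S|}\sqrt{f^*(r)},
\]
where $f^*(r) = \max\{k : 2L_S(k) \le r\}$. The remaining step is the bookkeeping identity $f^*(r) = (2L_S)^*(r) = L_S^*(\lfloor r/2\rfloor)$: indeed $2L_S(k) \le r$ is equivalent to $L_S(k) \le r/2$, and since $L_S(k)$ takes values we may compare to the integer $\lfloor r/2 \rfloor$ — more carefully, $L_S$ need not be integer-valued, so one should either extend the definition of $f^*$ to real-valued $f$ in the obvious way, or replace $f(n)$ by $\lceil 2L_S(n)\rceil$ and check that $\{|Z_n|_S \le \lceil 2L_S(n)\rceil\} \supseteq \{|Z_n|_S \le 2L_S(n)\}$ still has probability $\ge 1/2$, with $\lceil 2L_S(\cdot)\rceil^*(r) = \max\{k : 2L_S(k) \le r\} \ge L_S^*(\lfloor r/2\rfloor)$. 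Substituting yields $\cC_{G,S}(r) \ge \tfrac{1}{8|S|} L_S^*(\lfloor r/2\rfloor)^{1/2}$, as claimed.

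The only mild subtlety — and the one place to be careful rather than a genuine obstacle — is the integrality/monotonicity juggling between $L_S$ (real-valued, monotone increasing) and the integer-valued framework in which $f^*$ was defined; everything goes through as long as one is consistent about floors and ceilings, and the $\lfloor r/2\rfloor$ in the statement is precisely the artifact of this. The probabilistic content is entirely contained in the one-line Markov inequality, and the group-theoretic content is entirely delegated to Theorem~\ref{t-speed}, so there is no hard step.
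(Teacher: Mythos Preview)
Your proposal is correct and follows essentially the same route as the paper: apply Markov's inequality to get $\Pr{Z_n\in B_{2L_S(n)}}\ge 1/2$, invoke Theorem~\ref{t-speed} with $f(n)=2L_S(n)$, $c=1/2$, and $\min\mu=1/|S|$ for the uniform measure, then observe $f^*(r)\ge L_S^*(\lfloor r/2\rfloor)$. One small slip: your parenthetical claim that $\min\mu \ge 1/|S|$ for \emph{any} symmetric measure supported on $S$ is false (the mass can be distributed very unevenly), but this is irrelevant since you are using the uniform measure anyway.
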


\begin{proof}
 Let $\mu$ be the uniform distribution on $S$. By the Markov inequality, we have 
\[ \Pr{Z_n\in B_{2L_S(n)}(G, S)}\ge \frac{1}{2}.\]
Thus we may apply Theorem \ref{t-speed} to the function $f(n)=2L_S(n)$, with $c=1/2$. Since  $f^*(n)
\ge L(\lfloor \frac{n}{2} \rfloor)$, the inequality  in the statement follows.
\end{proof}
\begin{remark}
While Corollary \ref{c-speed} is a useful criterion to apply Theorem \ref{t-speed}, \ we point out that it is sometimes a-priori easier to find  a function $f(n)$ satisfying the assumption of Theorem \ref{t-speed} directly. We do not know whether such a function is necessarily bounded below (up to constants) by  the speed of the random walk. 

\end{remark}

\subsection{Groups that admit a cautious random walk}
Theorem \ref{t-speed} provides a criterion in terms of the random walk,  under which the coset covering function must admit a linear lower bound: this holds true as long as the assumption of the theorem is satisfied for a function $f(n)\sim \sqrt{n}$. Perhaps surprisingly, a tightly related condition also implies a matching linear upper bound, thanks to results of Erschler and Ozawa \cite{erschler2018finite-dimensional} and Shalom \cite{shalom2004harmonic}.
Following Erschler and Zheng \cite[\S 6.3]{erschler2021drift}, we give the following definition. 
\begin{definition}
Let $G$ be a finitely generated group and  $\mu$  a symmetric, non-degenerate probability measure $\mu$ on $G$ with finite support $S$. We say that the random walk $(Z_n)$ on $(G, \mu)$  is \emph{cautious} if for every $\varepsilon>0$ we have
\[\inf_n \Pr{Z_n\in B_{\varepsilon \sqrt{n}}(G, S)}>0.\]
We will further say that a finitely generated group $G$ \emph{admits a cautious random walk} if there exists $\mu$ with the above properties such that the random walk on $(G, \mu)$ is cautious.
\end{definition}

It follows from the results of Hebisch and Saloff-Coste \cite{hebisch1993gaussian}  that if $G$ has polynomial growth then it admits a cautious random walk. Erschler and Zheng prove in \cite[Lemma 4.5]{erschler2020isoperimetric} that the simple random walk is cautious for every group $G$ which admits controlled F\o lner pairs in the sense of Tessera \cite{tessera2011asymptotic}. We shall not need the definition of controlled F\o lner pairs, but we mention that by results of Tessera \cite{tessera2011asymptotic,tessera2013isoperimetric} the class of groups admitting controlled F\o lner pairs includes all polycyclic groups, Baumslag-Solitar groups, lamplighter groups $F \wr \Z$ where $F$ is a finite group, solvable groups of finite Pr\"ufer rank and various lattices in solvable algebraic $p$-adic groups.

Erschler and Ozawa  show that a group that admits a cautious random walk has Shalom's property $H_{FD}$ \cite[Corollary 2.5]{erschler2018finite-dimensional}. Shalom shows that if $G$ is amenable and has property $H_{FD}$ then it virtually surjects to $\Z$ \cite[Theorem 4.3.1]{shalom2004harmonic}. Since groups that admit a cautious random walk are amenable (see Remark~\ref{r-amenable}), these results implies the following.
\begin{theorem}[Erschler and Ozawa, Shalom] Let $G$ be a finitely generated infinite group which admits a cautious random walk.
Then $G$ has a finite index subgroup which surjects onto $\Z$.
\end{theorem}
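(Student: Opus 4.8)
The plan is to combine the three facts recalled in the paragraph preceding the statement; the proof is a short synthesis, with all the depth residing in the cited works. The only genuinely external inputs are Erschler--Ozawa's theorem that a cautious random walk forces property $H_{FD}$ and Shalom's theorem that $H_{FD}$ plus amenability forces a virtual surjection onto $\Z$, so the work here is to check that the hypotheses of those results are met.

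First I would establish that $G$ is amenable. A cautious random walk satisfies $\inf_n \Pr{Z_n\in B_{\eps\sqrt n}(G,S)}>0$ for every $\eps>0$, so in particular it meets the hypothesis of Theorem~\ref{t-speed} with $f(n)=\lceil \eps\sqrt n\,\rceil$; since $f(n)/n\to 0$, amenability follows as in Remark~\ref{r-amenable}. Concretely: if $G$ were non-amenable, Kesten's criterion would give spectral radius $\rho=\rho(\mu)<1$, and since $\mu$ is symmetric one has $\Pr{Z_n=g}^2\le \Pr{Z_{2n}=e}\le \rho^{2n}$, hence $\Pr{Z_n=g}\le \rho^n$ for every $g\in G$. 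Therefore
\[
\Pr{Z_n\in B_{\eps\sqrt n}(G,S)}\ \le\ |B_{\eps\sqrt n}(G,S)|\cdot\rho^{n}\ \le\ |S|^{\eps\sqrt n+1}\,\rho^{n},
\]
which tends to $0$ as $n\to\infty$, because $|S|^{\eps\sqrt n}$ grows subexponentially in $n$ while $\rho^n$ decays exponentially. This contradicts caution, so $G$ is amenable.

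Next I would quote the two structural results. By Erschler--Ozawa \cite[Corollary 2.5]{erschler2018finite-dimensional}, a finitely generated group admitting a cautious random walk has Shalom's property $H_{FD}$. By Shalom \cite[Theorem 4.3.1]{shalom2004harmonic}, an infinite, finitely generated amenable group with property $H_{FD}$ has a finite index subgroup with infinite abelianization, equivalently a finite index subgroup surjecting onto $\Z$; here the hypothesis that $G$ is infinite is what supplies the nonzero reduced first cohomology class on which property $H_{FD}$ acts. Combining this with the amenability established in the previous paragraph yields the theorem.

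The only ``hard part'' is the content of \cite{shalom2004harmonic} and \cite{erschler2018finite-dimensional} — Shalom's analysis of reduced first cohomology with finite-dimensional unitary coefficients, and the derivation of a virtual surjection onto $\Z$ from it — which I would not reproduce. Within the present paper this theorem plays a purely auxiliary role: together with Claims~\ref{lem:finite_index_bounds}--\ref{lem:quotient_upper_bound} it supplies the linear upper bound $\cC_{G,S}(r)\le c_2 r$ in Theorem~\ref{thm:poly}.
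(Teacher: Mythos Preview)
Your proposal is correct and follows essentially the same route as the paper: the paper's ``proof'' is the paragraph immediately preceding the statement, which combines amenability (via Remark~\ref{r-amenable} and Kesten's criterion), Erschler--Ozawa's result that a cautious random walk implies property $H_{FD}$, and Shalom's theorem that amenability plus $H_{FD}$ yields a virtual surjection onto $\Z$. Your write-up in fact supplies more detail on the amenability step than the paper does, but the logical structure is identical.
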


By Claims \ref{lem:finite_index_bounds}-\ref{lem:quotient_upper_bound}, this implies that if $G$ admits a cautious simple random walk, then $\cC_{G, S}(n)\le Cn$ for some $C>0$. Combining with Theorem \ref{t-speed}, we obtain Theorem~\ref{thm:poly}. In \S\ref{sec:vn} below we provide an elementary proof of this theorem, for virtually nilpotent groups.

\subsection{Groups with property (T)}
In this section we prove Theorem~\ref{thm:t}. Without loss of generality, we can assume that the generating set $S$ contains the identity. Let $H_1g_1,H_2g_2,\ldots,H_N g_N$ be cosets of infinite index of $G$. For $i \in \{1,\ldots,N\}$, let $P \colon \ell^2(H_i \backslash G) \to \ell^2(H_i \backslash G)$ be the Markov operator given by 
\begin{align*}
    [Pf](H_i g) = \frac{1}{|S|}\sum_{s \in S}f(H_i g s). 
\end{align*}
Since $G$ has property (T) and $1\in S$, there is an $\varepsilon>0$ such that the operator norm of $P$ is at most $\ee^{-\varepsilon}$, uniformly over all infinite index subgroups $H$ (see, e.g., \cite[Lemma 12.1.9]{brown2008Calgebras}). Let $f_n \in \ell^2(H_i \backslash G)$ be given by 
\begin{align*}
    f_n(H_i g) = \Pr{H_i Z_n = H_i g},
\end{align*}
and note that $f_{n+1} = P f_n$. It follows that 
the $\ell^2$-norm of $f_n$ is at most $\ee^{-\varepsilon n}$, since $f_0 = \delta_e$ has norm 1. Thus
\begin{align*}
\Pr{Z_n \in H_i g_i} = f_n(H_i g_i) \leq \ee^{-\varepsilon n}.
\end{align*}
By the union bound
\begin{align*}
    \Pr{Z_n \in \cup_i H_i g_i} \leq N \ee^{-\varepsilon n}.
\end{align*}
It follows that if $N < \ee^{\varepsilon n}$ then with positive probability $Z_n$ is not in $\cup_i H_i g_i$, and since $Z_n \in B_n$ with probability 1, at least $\ee^{\varepsilon r}$ c.i.i.s are needed to cover the ball of radius $r$.
 
\begin{remark}
The assumption that $G$ has property (T) can be weakened, as the above proof of Theorem~\ref{thm:t} only requires the existence of a unifom spectral gap for the quasi-regular representations $G \cc \ell^2(H \backslash G)$ for $H < G$ of infinite index. This property is equivalent to property FM in the sense of Cornulier \cite{cornulier2015irreducible}, namely that every action of $G$ on a set which preserves an invariant mean has a finite orbit. Property FM is implied by property (T), but it is weaker in general.
\end{remark}

\subsection{Virtually nilpotent groups}
\label{sec:vn}
In this section we provide an elementary proof of Theorem~\ref{thm:poly}, for the case of virtually nilpotent groups. The next proposition, which is the core of the proof of this theorem, shows that the so-called ``doubling condition'' implies a linear lower bound on the coset covering function. The idea of the proof is to get an upper bound on the size of the intersection of  a c.i.i.\  with  $B_r(G, S)$.

\begin{proposition}\label{prop:covering_for_doubling_groups}
Suppose that there exists some $L > 0$ and $r_0$ such that for all $r > r_0$ we have\begin{align*}
\frac{|B_{2r}(G, S)|}{|B_r(G, S)|} \leq L.
\end{align*}
Then for all $r > r_0$, \begin{align*}
\cC_{G, S}(r) \geq \frac{r}{L}.
\end{align*}
\end{proposition}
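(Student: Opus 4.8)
The plan is to bound the number of elements of $B_r(G,S)$ that a single coset of an infinite index subgroup can contain, and then conclude by a counting argument. So fix a c.i.i.\ $Hg$ with $[G:H]=\infty$. The key geometric observation is that if $x$ and $y$ both lie in $Hg\cap B_r(G,S)$, then $xy^{-1}\in H$; conversely the left cosets of $H$ partition $G$, so $H$-translates of a point are ``spread out''. I would instead work as follows: consider the map $B_r(G,S)\to H\backslash G$ sending $x\mapsto Hx$. Two points $x,y\in B_r$ have the same image iff $xy^{-1}\in H$... but that is the wrong side. Let me restructure using \emph{right} cosets consistently: a c.i.i.\ in the statement is $Hg$, and $x,y\in Hg$ means $x=h_1g$, $y=h_2g$, so $xy^{-1}=h_1h_2^{-1}\in H$. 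Hence all of $Hg\cap B_r$ lies in a single right coset of... no: $x,y\in Hg$ gives $xy^{-1}\in H$, i.e. $x$ and $y$ are in the same left coset $Hx$. So $Hg\cap B_r$ is contained in one left coset $Hx_0$ (any $x_0$ in it), and translating, $(Hg)\cap B_r \subseteq Hx_0$ with $x_0\in B_r$; then $x_0^{-1}(Hg\cap B_r)\subseteq x_0^{-1}Hx_0 =: H'$, a conjugate of $H$, still of infinite index, and this set lies in $B_{2r}(G,S)$ since $x_0\in B_r$. So it suffices to bound $|H'\cap B_{2r}|$ for $H'$ an infinite index subgroup.

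For that bound I would use the doubling hypothesis. The subgroup $H'$ has infinite index, so there exist infinitely many pairwise disjoint left cosets $H', a_1H', a_2H',\dots$; pick $k := \lceil L\rceil + 1$ of them, say with coset representatives $a_0=e,a_1,\dots,a_{k-1}$. Choose $R$ large enough (depending only on these fixed finitely many $a_i$, hence on $H'$, hence ultimately we want independence — here is the subtlety) so that $a_i\in B_R$ for all $i$. Then the sets $a_i\bigl(H'\cap B_{2r}\bigr)$ for $i=0,\dots,k-1$ are pairwise disjoint (they sit in distinct cosets) and each is contained in $B_{2r+R}$. Hence $k\,|H'\cap B_{2r}| \le |B_{2r+R}|$. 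For $r$ large this is $\le |B_{4r}|$, and iterating the doubling hypothesis twice gives $|B_{4r}|\le L^2|B_r|$, so $|H'\cap B_{2r}|\le \frac{L^2}{k}|B_r| < L|B_r|$... that is not quite sharp enough. I would instead aim directly: pick exactly enough cosets to get a clean contradiction, i.e.\ use that each c.i.i.\ meets $B_r$ in at most $|B_{2r}|/$(number of disjoint cosets we can fit inside a slightly larger ball).

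The cleanest route avoiding the dependence-of-$R$-on-$H$ issue: given the cover $B_r\subseteq\bigcup_{i=1}^N H_ig_i$, for each $i$ we showed $H_ig_i\cap B_r$ lies in a single left coset of $H_i$ meeting $B_r$; so $B_r$ is covered by $N$ sets, each contained in a left coset of some infinite index subgroup, each such coset meeting $B_r$. Now here is the decisive idea: apply this inside $B_r$ itself. Each piece $P_i := H_ig_i\cap B_r$, being inside one left coset $H_ix_i$ with $x_i\in B_r$, satisfies: the left-translate $x_i H_i x_i^{-1}$-coset structure shows $P_i^{-1}P_i\subseteq x_i H_i x_i^{-1}$. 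Pick $p_i\in P_i$ (if $P_i=\emptyset$ discard it); then $p_i^{-1}P_i\subseteq H_i'$ where $H_i'=x_iH_ix_i^{-1}$ has infinite index. Since $[G:H_i']=\infty\ge L+1$, we can find elements $b_1,\dots,b_{\lceil L\rceil}\in B_{2r}$ lying in distinct nontrivial cosets of $H_i'$ — \emph{and we can take these $b_j$ inside $B_{2r}$} because $B_{2r}$, being far larger than $B_r$, cannot be covered by $\lceil L\rceil$ cosets of $H_i'$ (this is where we bootstrap: if it could, we would already have a small cover of a bigger ball, contradicting doubling via induction on $r$, or more simply: $|B_{2r}|>L|B_r|\ge$ ... ). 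The honest statement to prove is the quantitative claim $|H'\cap B_{2r}| \le |B_r|$ for every infinite index $H'$ and every $r>r_0$; granting this, each $|P_i|\le|H_i'\cap B_{2r}|\le |B_r|$, wait we need $p_i^{-1}P_i\subseteq H_i'\cap B_{2r}$ which holds since $p_i,P_i\subseteq B_r$, so $|P_i|\le |H_i'\cap B_{2r}|\le|B_r|$, giving $|B_r|\le\sum|P_i|\le N|B_r|$ — useless. So the bound on $|H'\cap B_{2r}|$ must be $\le |B_r|/r$ in spirit, i.e.\ I should compare $|H'\cap B_{2r}|$ not to $|B_r|$ but show it is at most $|B_{2r}|/r$ by packing $r$-many disjoint translates.

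\begin{proof}
Fix $r > r_0$ and suppose $B_r(G,S)\subseteq\bigcup_{i=1}^N H_ig_i$ with each $H_i$ of infinite index; we show $N\ge r/L$. For each $i$, set $P_i = H_ig_i\cap B_r(G,S)$. If $P_i=\emptyset$ it may be discarded, so assume each $P_i\neq\emptyset$ and fix $p_i\in P_i$. For $x,y\in H_ig_i$ we have $xy^{-1}\in H_i$, hence $p_i^{-1}P_i\subseteq p_i^{-1}H_i p_i =: H_i'$, a subgroup of infinite index in $G$, and $p_i^{-1}P_i\subseteq B_{2r}(G,S)$ since $p_i, P_i\subseteq B_r(G,S)$.

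\textbf{Claim.} For any infinite index subgroup $H'\le G$ and any $s\ge 1$, $|H'\cap B_s(G,S)|\le |B_{2s}(G,S)|/\lfloor s/|B_1(G,S)|\rfloor$...
\end{proof}

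I realize the packing count must be set up carefully; let me commit to the following clean plan. The main obstacle, and the heart of the argument, is the claim that every c.i.i.\ meets $B_r$ in at most $L|B_r|/r$ points; equivalently, that one can pack $r/L$ many pairwise-disjoint left-translates of $H'\cap B_r$ (by elements of $B_r$) inside $B_{2r}$, whose size is $\le L|B_r|$. To produce these translates, I would build greedily a sequence $e=a_0,a_1,\dots\in B_r(G,S)$ with $a_{j}\notin \bigcup_{l<j}a_l H'$: as long as $\bigcup_{l<j}a_lH'$ does not cover $B_r$, such $a_j$ exists, and by Neumann's lemma (or just infinite index) this greedy process, if it stopped early at step $m$, would exhibit $B_r\subseteq\bigcup_{l<m}a_lH'$, i.e.\ $B_r$ covered by $m$ cosets of a \emph{single} infinite index subgroup. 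But then $B_{2r}$... here I would instead observe directly that each $a_lH'\cap B_r$ has size $\le$ (itself), going in circles. The resolution is to run this greedy packing inside $B_{2r}$ against the set $H'\cap B_r$ rather than full cosets: the translates $a_l(H'\cap B_r)$ are pairwise disjoint as long as the $a_l$ are in distinct left cosets of $H'$, they live in $B_{2r}$ when $a_l\in B_r$, and we can keep choosing $a_l\in B_r$ in a new coset of $H'$ until $\bigcup a_l H'\supseteq B_r$ — which happens after at least ... this still needs the number of steps bounded below, i.e.\ needs that $B_r$ is \emph{not} covered by few cosets of a single s.i.i., which is exactly $\mathfrak{D}_{G,S}(r)$ being large, and $\mathfrak{D}_{G,S}(r)\ge r$ was noted trivially in the introduction. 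That trivial bound is the missing ingredient: it says any single s.i.i.\ needs $\ge r$ cosets to cover $B_r$, so the greedy process runs at least $r$ steps (until near-covering) — more precisely, the number $t$ of disjoint translates $a_l(H'\cap B_r)\subseteq B_{2r}$ we obtain satisfies $t\cdot\min_l|a_l(H'\cap B_r)| $ summing up, and since these cover is not the point — rather, taking the $t$ \emph{largest} translates, $t\ge r/$(something). I will therefore structure the final proof as: (1) reduce to bounding $|H'\cap B_{2r}|$ as above; (2) show, using $\mathfrak{D}_{G,S}(r)\ge r$ in the Schreier graph of $G/H'$, that the $r$-ball in that Schreier graph has $\ge r$ vertices, i.e.\ there are $\ge r$ distinct cosets $a_0H',\dots$ with $a_l\in B_r$; (3) translate $H'\cap B_{2r}$ by these $r$ representatives to get $r$ disjoint subsets of $B_{3r}(G,S)\subseteq B_{4r}$, whence $r\,|H'\cap B_{2r}|\le |B_{4r}|\le L^2|B_r|$, and then $|P_i|\le|H_i'\cap B_{2r}|\le L^2|B_r|/r$; (4) sum over $i$: $|B_r|\le\sum_i|P_i|\le N L^2|B_r|/r$, giving $N\ge r/L^2$. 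Since the proposition only asserts $N\ge r/L$, I will instead apply the doubling hypothesis once more sharply (using balls of radius $2r$ directly, or adjusting $r_0$) to recover the stated constant; the main work, and the only real obstacle, is steps (2)–(3): exhibiting enough disjoint translates living in a controlled ball.
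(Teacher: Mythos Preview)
Your final plan (steps (1)--(4)) captures the same core idea as the paper's proof: show that each c.i.i.\ can meet $B_r$ in at most a $\tfrac{L}{r}$-fraction of its points by packing many disjoint translates of that intersection into a slightly larger ball, then invoke doubling. The paper carries this out directly: for a c.i.i.\ $C=Hg$ it walks along a length-$r$ path in the Schreier graph $H\backslash G$, producing $r$ distinct right cosets $C,Cg_1,\dots,Cg_r$ with $|g_k|\le k$, observes $|B_r\cap C|\le |B_{r+k}\cap Cg_k|\le |B_{2r}\cap Cg_k|$, and sums to get $r\,|B_r\cap C|\le |B_{2r}|\le L\,|B_r|$. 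No conjugation, no extra ball enlargement, and the constant $r/L$ falls out exactly.

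Your route loses a factor of $L$ only because of the unnecessary detour through $H_i'=p_i^{-1}H_ip_i$: after writing $|P_i|\le |H_i'\cap B_{2r}|$ you are forced to pack translates into $B_{3r}$ or $B_{4r}$, hence the $L^2$. But you don't need to conjugate at all. Take your representatives $a_0,\dots,a_{r-1}\in B_r$ in distinct \emph{left} cosets of $H_i$ (this is exactly your step (2), and is the same ``$\mathfrak{D}_{G,S}(r)\ge r$'' Schreier-graph fact the paper uses). Then the sets $a_l P_i=a_l(H_ig_i\cap B_r)$ lie in the distinct cosets $a_lH_ig_i$, hence are pairwise disjoint, and they sit inside $B_{2r}$. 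Summing gives $r\,|P_i|\le |B_{2r}|\le L\,|B_r|$, and then $|B_r|\le\sum_i|P_i|\le N L|B_r|/r$ yields $N\ge r/L$ on the nose. So your argument is correct in spirit; the only genuine defect is that the version you actually wrote down proves $r/L^2$, and the ``apply doubling more sharply'' remark does not fix this---what fixes it is skipping the conjugation.
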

\begin{proof}
Write the generating set as $S = \{s_1, \ldots, s_m\}$, and fix $r > r_0$.

First, notice that $|B_r(G, S) \cap  C| \leq |B_{r+1}(G, S) \cap C s_i|$ for any $r$, any c.i.i.\ $C$ (in fact, for any subset of $G$) and any generator $s_i$.
This holds because $(B_r(G, S) \cap C)s_i$ is a subset of $B_{r+1}(G, S) \cap   C s_i$.

Consider the Schreier coset graph of an infinite-index subgroup $H$ in $G$. This is a graph whose nodes are (right) cosets of $H$, which has an edge between two cosets if one can be obtained from the other by right multiplication by an element of $S$. Since the Schreier graph of $H$ in $G$ is infinite ($H$ has infinite index), it contains arbitrarily long paths. Find a path of length $r$ starting at some $C= Hg$, with its edges marked by generators $s_{i_1}, s_{i_2}, \ldots, s_{i_r}$. Denote $g_k = s_{i_1}s_{i_2}\cdots s_{i_k}$. The nodes on this path are $C, C g_1, C g_2, \ldots, C g_r$.

From the argument above, we have $|B_{r+1}(G, S) \cap C g_1| \geq |B_r(G, S) \cap C|$. Similarly, 
\begin{align*}
  |B_{r+k}(G, S) \cap C g_k| 
  \geq |B_{r+k - 1}(G, S) \cap C g_{k-1}| \geq |B_r(G, S) \cap C| 
\end{align*}
for any $k \leq r$.

Moreover $|B_{2r}(G, S) \cap C g_k| \geq  |B_{r+k}(G, S) \cap C g_k|$ for $k \leq r$, and so
\begin{align*}
    |B_{2r}(G, S) \cap C g_k| \geq |B_r(G, S) \cap C|.
\end{align*}
Taking the sum from $k=1$ to $k=r$ yields
\begin{align*}
\sum_{k=1}^r |B_{2r}(G, S) \cap C g_k| \geq r|B_r(G, S) \cap C|.
\end{align*}
Since each $C g_k$ is a distinct coset of $H$, they are all disjoint. So, \begin{align*}
\sum_{k=1}^r |B_{2r}(G, S) \cap C g_k| = \left|B_{2r}(G, S) \cap \bigcup_{i=1}^r C g_k\right| \leq |B_{2r}(G, S)|.
\end{align*}
Therefore, \begin{align*}
|B_r(G, S) \cap C| \leq \frac{1}{r}|B_{2r}(G, S)| = \frac{1}{r}\frac{|B_{2r}(G, S)|}{|B_r(G, S)|}|B_r(G, S)|.
\end{align*}
Applying the doubling property now yields
\begin{align*}
     |B_r(G, S) \cap C| \leq \frac{L}{r}|B_r(G, S)|.
\end{align*}
We have thus shown that each c.i.i.\ takes at most a $\frac{L}{r}$-fraction of the ball of radius $r$. Hence at least $\frac{r}{L}$ c.i.i.s are needed to cover the ball.

\end{proof}

Given this proposition, the proof of Theorem~\ref{thm:poly} for virtually nilpotent groups is straightforward.

It is a well-known consequence of the result of Bass \cite{bass1972nilpotentgrowth} and Guivarc'h \cite{guivarch1973nilpotentgrowth} that every virtually nilpotent group satisfies the doubling condition in the hypothesis of Proposition~\ref{prop:covering_for_doubling_groups}, and thus for each such group $(G,S)$ there is a constant $c > 0$ such that $\cC_{G,S}(r) \geq c r$. For the other direction, note that every infinite nilpotent group has a quotient to $\Z$, and thus, by Claim~\ref{lem:quotient_upper_bound} there is another constant $c'$ such that $\cC_{G,S}(r) \leq c' r$. Claim~\ref{lem:finite_index_bounds} now implies that the same holds for every infinite virtually nilpotent group. This completes the proof.

\bibliography{refs}
\end{document}